\numberwithin{equation}{subsection}
\numberwithin{figure}{subsection}
\newtheorem{dummy}{dummy}[section]
\newtheorem{lemma}[dummy]{Lemma}
\newtheorem{theorem}[dummy]{Theorem}
\newtheorem{conjecture}[dummy]{Conjecture}
\newtheorem{corollary}[dummy]{Corollary}
\newtheorem{proposition}[dummy]{Proposition}
\newtheorem{definition/proposition}[dummy]{Definition/Proposition}
\theoremstyle{definition}
\newtheorem{definition}[dummy]{Definition}
\newtheorem{remark}[dummy]{Remark}
\DeclareMathOperator{\cone}{Cone}
\DeclareMathOperator{\coker}{Coker}
\DeclareMathOperator{\Gr}{Gr}
\newcommand{\bfL}{\mathbf{L}}
\newcommand{\cA}{\mathcal{A}}
\newcommand{\id}{\mathsf{id}}
\renewcommand{\emptyset}{\varnothing}
\renewcommand{\tilde}{\widetilde}
\newcommand{\field}{\mathbb{K}}
\newcommand{\isomorphic}{\cong}
\newcommand{\CH}{\mathrm{CH}}
\newcommand{\modulispace}{\mathcal{M}}
\newcommand{\Hom}{\mathrm{Hom}}
\newcommand{\setword}[2]{%
  \phantomsection
  #1\def\@currentlabel{\unexpanded{#1}}\label{#2}%
}
\newcommand{\SmProj}{\mathbf{SmProj}}
\newcommand{\Sm}{\mathbf{Sm}}
\newcommand{\Var}{\mathbf{Var}}
\newcommand{\prop}{\text{\tiny$\mathrm{prop}$}}
\newcommand{\Cor}{\mathbf{Cor}}
\newcommand{\rat}{\mathrm{rat}}
\newcommand{\FinCor}{\mathbf{FinCor}}
\newcommand{\PreSh}{\mathbf{PreSh}}
\newcommand{\Ab}{\mathcal{A}b}
\newcommand{\Sh}{\mathbf{Sh}}
\newcommand{\tr}{\mathrm{tr}}
\newcommand{\Nis}{\mathrm{Nis}}
\newcommand{\eff}{\mathrm{eff}}
\newcommand{\gm}{\mathrm{gm}}
\newcommand{\DM}{\mathbf{DM}}
\newcommand{\Chow}{\mathbf{Chow}}
\newcommand{\M}{\mathrm{M}}
\newcommand{\pt}{\mathrm{pt}}
\newcommand{\weightcomplex}{\mathrm{W}_{\mathrm{c}}}
\newcommand{\weight}{\mathrm{W}}
\newcommand{\realization}{\mathfrak{R}}
\newcommand{\Ch}{\mathbf{Ch}}
\newcommand{\Coch}{\mathbf{Coch}}
\newcommand{\Ho}{\mathrm{Ho}}
\newcommand{\op}{\mathrm{op}}
\begin{document}

\title[Integral cohomology of dual boundary complexes is motivic]{Integral cohomology of dual boundary complexes is motivic}

\author[T. Su]{Tao Su}
\email{sutao@bimsa.cn}
\address{Beijing Institute of Mathematical Sciences and Applications}

\date{}

\begin{abstract}

In this note, we give a motivic characterization of the integral cohomology of dual boundary complexes of smooth quasi-projective complex algebraic varieties. 
As a corollary, the dual boundary complex of any stably affine space (of positive dimension) is contractible.
In a separate paper \cite{Su23}, this corollary has been used by the author in his proof of the weak geometric P=W conjecture for very generic $GL_n(\mathbb{C})$-character varieties over any punctured Riemann surfaces.

\end{abstract}

\maketitle

\tableofcontents

\addtocontents{toc}{\protect\setcounter{tocdepth}{1}}

\section*{Introduction}

Let $X$ be a smooth complex quasi-projective variety. 
Take a log compactification $\overline{X}$ with simple normal crossing divisor $\partial X=\overline{X}\setminus X$. 
The dual boundary complex $\mathbb{D}\partial X$ is the dual complex of the irreducible components of $\partial X$, encoding how the irreducible components intersect.
If necessary, we refer to \cite{Su23} for a crash review.
Up to homotopy, $\mathbb{D}\partial X$ is an invariant of $X$ as an algebraic variety \cite{Dan75}, independent of the choice of the log compactifications.
In general, the homotopy type of any finite simplicial complex can be realized by a dual boundary complex \cite[E.g.2.6]{Pay13}. 
On the contrary, a conjecture attributed to M. Kontsevich (and indepdently, J. Koll\'{a}r and C. Xu) states the following:
\begin{conjecture}[Kontsevich(-Koll\'{a}r-Xu)]\label{conj:Kontsevich_conjecture}
The dual boundary complex of any log \textbf{CY} variety is (a finite quotient of) a sphere.
\end{conjecture}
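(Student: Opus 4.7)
The natural plan is to route the conjecture through non-archimedean geometry: I would interpret the dual boundary complex, up to PL homotopy, as the essential skeleton of the Berkovich analytification $X^{\mathrm{an}}$ of the log Calabi-Yau variety, and then identify this skeleton with the base of a non-archimedean SYZ fibration whose global topology is forced to be a sphere by the Calabi-Yau condition itself. This reroutes a question about simplicial complexes into a question about collapses of integral-affine manifolds, where tools from mirror symmetry become available.

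Concretely, the steps are as follows. First, I would pick an snc log CY model $(\overline{X}, \partial X)$ with $K_{\overline{X}} + \partial X \sim 0$ and fix a nowhere vanishing section of the log canonical bundle. Using work of Mustata-Nicaise and Nicaise-Xu, the essential skeleton $\mathrm{Sk}(X) \subset X^{\mathrm{an}}$ is naturally a subcomplex of $\mathbb{D}\partial X$ collecting those divisorial valuations of minimal log discrepancy; under the log CY hypothesis I would first verify that every boundary component is a log-canonical place, so that $\mathrm{Sk}(X)$ coincides with $\mathbb{D}\partial X$ up to elementary collapse. Next, I would pull back the Kontsevich-Soibelman integral affine structure from $\mathrm{Sk}(X)$ to $\mathbb{D}\partial X$, losing control only on a locus of real codimension at least two. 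Finally, I would promote this affine structure to a genuine non-archimedean Calabi-Yau metric in the sense of Boucksom-Jonsson and, via a non-archimedean Cheeger-Gromov-type convergence, argue that the underlying topological space must be $S^{n}$ or a finite free quotient thereof.

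The hard part will be the last step. Even granting that $\mathrm{Sk}(X)$ carries an integral affine structure off a codimension-two locus, pinning down the global spherical topology requires controlling monodromy around the singular set, and this is precisely what remains open in dimensions $\geq 4$; in dimensions $\leq 3$ Koll\'{a}r-Xu bypass the issue using termination of the minimal model program for threefolds. A second, perhaps more tractable line of attack would be an induction on dimension via MMP-style reductions on the pair $(\overline{X}, \partial X)$, but the obstacle there is that MMP steps alter $\mathbb{D}\partial X$ in ways whose effect on homotopy type is not obviously benign beyond dimension three. I do not expect a clean general proof to emerge from the present techniques, and I read the abstract as announcing a strictly weaker (but still useful) corollary: for the stably affine case it suffices to combine the motivic characterization of $H^{*}(\mathbb{D}\partial X; \mathbb{Z})$ developed later in this paper with the vanishing of weight-zero cohomology on stably affine spaces, which sidesteps the non-archimedean SYZ machinery altogether.
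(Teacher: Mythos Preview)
The statement you were asked to prove is a \emph{conjecture}, and the paper does not prove it. It is explicitly introduced as an open problem (``the Algebra-geometric version of the Poincar\'{e} conjecture''), with the text immediately noting that the main general result to date is Koll\'{a}r--Xu's finite-quotient version in dimension at most $5$, and that only partial results are known in higher dimensions. There is no ``paper's own proof'' to compare against.

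Your proposal is accordingly not a proof but a survey of a possible non-archimedean SYZ strategy, and you are candid about this: you flag that controlling monodromy around the codimension-two singular locus is exactly what remains open in dimensions $\geq 4$, and you state outright that you do not expect a clean general proof from present techniques. That self-assessment is accurate. The sketch you give (essential skeleton, integral affine structure off codimension two, non-archimedean Calabi--Yau metric) is a reasonable heuristic framework, but none of the steps after the identification $\mathrm{Sk}(X) \simeq \mathbb{D}\partial X$ can currently be made rigorous in the generality of the conjecture.

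Your final paragraph correctly reads the paper's actual content: the paper proves the strictly weaker Corollary that the dual boundary complex of a stably affine space of positive dimension is contractible, and it does so by combining the motivic characterization $H^{i-1}(\mathbb{D}\partial X;\mathbb{Z}) \cong H_{\weight,c}^{i,0}(X;\mathbb{Z})$ with stable invariance of $\M^c$ (via Voevodsky's cancellation theorem) and a Lefschetz-type surjection on $\pi_1$. No non-archimedean geometry is used.
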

The conjecture is termed as the Algebra-geometric version of the Poincar\'{e} conjecture in \cite[Conj.1.2]{MM24}.
So far, the main general result is due to Koll\'{a}r-Xu \cite{KX16}, which proves the finite quotient version of the conjecture in dimension at most $5$.
In higher dimensions, only partial results are known \cite{Mau20,Bra21,Mor21,FMM22,MM24}, see \cite[\S1.6]{MM24} for a quick summary.
Among them, we recall that the rational cohomology of the dual boundary complex is well understood due to an intrinsic characterization via the mixed Hodge structure on the bulk variety (see e.g. \cite{Pay13}):
\[
\tilde{H}^{i-1}(\mathbb{D}\partial X;\mathbb{Q}) \isomorphic \Gr_0^WH_c^i(X;\mathbb{Q}).
\]
On the other hand, the torsion of the integral cohomology remains a mystery (see \cite[P.529]{KX16}, \cite[\S1.6.(2)]{MM24}).
Our main purpose in this note is to enhance the previous characterization to a motivic level so as to encode integral cohomology:
\begin{proposition}\label{prop:dual_boundary_complex_is_motivic}
For any smooth complex quasi-projective variety $X$, we have
\begin{equation}
H^{i-1}(\mathbb{D}\partial X;\mathbb{Z}) \isomorphic H_{\weight,c}^{i,0}(X;\mathbb{Z}),
\end{equation}
where $H_{\weight,c}^{a,b}(X;\mathbb{Z})$ is the \emph{integral singular weight cohomology with compact support} of $X$ (Definition \ref{def:Betti_weight_cohomology_with_compact_support}).
\end{proposition}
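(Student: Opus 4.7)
The strategy is to compute both sides via a common log compactification and to identify them with the cohomology of the same explicit cochain complex built from the boundary strata.

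Fix a log compactification $(\overline{X},\partial X)$ of $X$ with simple normal crossing divisor $\partial X=\bigcup_{i=1}^{n}D_i$, and write $D^{(p)}:=\coprod_{|I|=p}\bigcap_{i\in I}D_i$ for the smooth projective variety of $p$-fold intersections. The simplices of the dual boundary complex are combinatorial: the $(p-1)$-simplices of $\mathbb{D}\partial X$ are in bijection with $\pi_0(D^{(p)})$. Hence, directly from the simplicial definition, $H^{i-1}(\mathbb{D}\partial X;\mathbb{Z})$ is the $(i-1)$-st cohomology of
\[
C^\bullet(\mathbb{D}\partial X;\mathbb{Z}) = \bigl[H^0(D^{(1)};\mathbb{Z}) \to H^0(D^{(2)};\mathbb{Z}) \to H^0(D^{(3)};\mathbb{Z}) \to \cdots\bigr],
\]
concentrated in cohomological degrees $\geq 0$, with differentials the alternating sums of pullbacks along the face inclusions $D^{(p+1)}\hookrightarrow D^{(p)}$.

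The core step is to recognize the right-hand side as the cohomology of the \emph{same} complex. Unfolding Definition \ref{def:Betti_weight_cohomology_with_compact_support}, $H_{W,c}^{a,b}(X;\mathbb{Z})$ is constructed from the integral weight complex $\weightcomplex(X)$ of $X$, whose explicit description over an SNCD compactification is modelled on Gillet--Soul\'e's cubical hyperresolution: a representative is the diagram of smooth projective varieties obtained from $\overline{X}$ together with the strata $D^{(p)}$ and their natural inclusions. Extracting the $(*,0)$-bidegree amounts to evaluating singular $H^0$ on each smooth projective piece, which is the free abelian group on its connected components, and the induced differentials are the alternating sums of restrictions to intersections. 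This is the same data as $C^\bullet(\mathbb{D}\partial X;\mathbb{Z})$ above (the $H^0(\overline{X};\mathbb{Z})$ summand contributed by the top term plays the role of an augmentation consistent with the shift of $1$ between the two sides of the proposition). The matching of differentials is then by inspection.

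The hard part will be the well-definedness of the right-hand side as an invariant of $X$, i.e.\ the independence of the chosen log compactification. Over $\mathbb{Q}$ this is classical (Gillet--Soul\'e, Bondarko). Integrally, I would either invoke Bondarko's weight structure on an appropriate category of integral motives, or give a direct proof by using the weak factorization theorem: it would suffice to check that a single blow-up of $\overline{X}$ along a smooth center contained in $\partial X$ produces a quasi-isomorphic weight complex at the level of $H^0$ of the smooth projective components, which is a combinatorial verification using the fact that $H^0$ of a blow-up along a connected smooth center equals $H^0$ of the base. Once this motivic invariance is in place, the proposition follows from the identification of cochain complexes sketched above.
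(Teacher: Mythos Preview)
Your identification is correct and is exactly the paper's argument: apply $\realization_B^0=H^0(-;\mathbb{Z})$ to the explicit weight complex of $X$ coming from a log compactification (Proposition~\ref{prop:weight_complex_of_a_smooth_quasi-projective_variety}), and observe that the resulting cochain complex $[H^0(\overline{X})\to H^0(Y^{(1)})\to\cdots]$ is the reduced simplicial cochain complex $\tilde{C}^{\bullet}(\mathbb{D}\partial X)[-1]$. The paper streamlines this by choosing a \emph{very simple} normal crossing boundary, so each nonempty $Y_I$ is connected and contributes a single $\mathbb{Z}$.

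One correction to your framing: what you call ``the hard part''---integral well-definedness of $\weightcomplex(X)$ independent of the compactification---is not something this proof has to supply. Gillet--Soul\'e's construction (Theorem~\ref{thm:weight_complex}) already lands in $K_b(\Chow_{\rat}^{\eff}(\field))$ with \emph{integral} Chow groups, and Bondarko's weight complex functor (Theorem~\ref{thm:weight_complex_functor}) is likewise integral; the rational statement you are remembering concerns degeneration of the descent spectral sequence (Lemma~\ref{lem:weight_cohomology_vs_weight_filtration}), not the definition of $\weightcomplex(X)$. So your proposed weak-factorization check is unnecessary: once the background of Section~1 is in place, the proposition is the two-line identification you already gave.
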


An important class of examples for Conjecture \ref{conj:Kontsevich_conjecture} comes from the geometric P=W conjecture \cite{KNPS15,Sim16}. 
By a folklore conjecture, the smooth character variety $\modulispace_B$ over any punctured Riemann surfaces is expected to be log Calabi-Yau. 
See \cite{Wha20,FF23} for the proof for $SL_2(\mathbb{C})$-character varieties.
The weak version of the geometric P=W conjecture then becomes a special case of the conjecture above: the dual boundary complex of $\modulispace_B$ has the homotopy type of a sphere
of one dimension lower.
For previous results for lower genera or lower ranks, see \cite{Kom15,Sim16,Sza19,Sza21,Su21,NS22,MMS22,FF23}.

Our second goal is to pave the way for a separate paper \cite{Su23}, in which we prove the weak geometric P=W conjecture for all very generic character varieties.
That is, we prove the following corollary:
\begin{corollary}\label{cor:dual_boundary_complex_of_a_stably_affine_space_is_contractible}
The dual boundary complex of any stably affine space (of positive dimension) is contractible.
\end{corollary}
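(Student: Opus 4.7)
My strategy is to use Proposition~\ref{prop:dual_boundary_complex_is_motivic} to translate the contractibility of $\mathbb{D}\partial X$ into a computation of the integral weight cohomology with compact support, and then to exploit the stably affine hypothesis motivically.

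First, as the title of the paper suggests, the invariant $H_{\weight,c}^{*,0}(-;\mathbb{Z})$ is expected to factor through the motive with compact support $M^c(-)$. Combining the hypothesis $X \times \mathbb{A}^k \isomorphic \mathbb{A}^n$ with the motivic Künneth formula and the identification $M^c(\mathbb{A}^m) \isomorphic \mathbb{Z}(m)[2m]$ gives
\[
M^c(X) \otimes \mathbb{Z}(k)[2k] \isomorphic \mathbb{Z}(n)[2n],
\]
and cancellation by the invertible Tate twist $\mathbb{Z}(k)[2k]$ yields $M^c(X) \isomorphic M^c(\mathbb{A}^{n-k})$, with $n-k \geq 1$ since $X$ has positive dimension.

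Second, the right-hand side is straightforward to compute: the standard compactification $\mathbb{A}^{n-k} \subset \mathbb{P}^{n-k}$ has boundary equal to the smooth irreducible divisor $\mathbb{P}^{n-k-1}$, so $\mathbb{D}\partial \mathbb{A}^{n-k}$ is a single vertex. Feeding this through Proposition~\ref{prop:dual_boundary_complex_is_motivic} then shows that the integer cohomology groups of $\mathbb{D}\partial X$ agree with those of a point.

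Finally, to upgrade this $\mathbb{Z}$-acyclicity to a genuine homotopy equivalence $\mathbb{D}\partial X \simeq \mathrm{pt}$, one must address simple connectivity; given that, Hurewicz and Whitehead close the argument. Since $X$ is a factor of $\mathbb{A}^n$, it is itself simply connected, and one would like to transport this simple connectivity to $\mathbb{D}\partial X$ via a known relation between $\pi_1(X)$ and $\pi_1(\mathbb{D}\partial X)$, or alternatively to exhibit a log compactification for which the dual boundary complex is manifestly simply connected. This last step is the main obstacle I anticipate: Proposition~\ref{prop:dual_boundary_complex_is_motivic} is a purely cohomological statement and, on its own, produces only $\mathbb{Z}$-acyclicity, so extra geometric input is needed to promote it to contractibility.
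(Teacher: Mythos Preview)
Your outline matches the paper's proof almost exactly: the motivic step is precisely Lemma~\ref{lem:stably_invariance_of_motives_with_compact_support} (which uses Voevodsky's cancellation theorem---note that $\mathbb{Z}(1)$ is only \emph{fully faithful} in $\DM_{\Nis}^{\eff,-}$, not invertible, but that suffices), and the passage to $\tilde H^*(\mathbb{D}\partial X;\mathbb{Z})=0$ via Proposition~\ref{prop:dual_boundary_complex_is_motivic} is the same.

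The gap you flag is real, and the paper fills it as follows. For $d\geq 3$ it proves a separate Lemma~\ref{lem:fundamental_group_for_bulk_vs_dual_boundary_complex}: for a smooth connected \emph{affine} $X$ of dimension $\geq 3$ there is a surjection $\pi_1(X)\twoheadrightarrow\pi_1(\overline{X})\simeq\pi_1(D)\twoheadrightarrow\pi_1(\mathbb{D}\partial X)$. The isomorphism $\pi_1(D)\simeq\pi_1(\overline{X})$ comes from a Lefschetz-type argument \`a la Milnor (Morse theory for $1/L_p$ on $\overline{X}$ shows $\pi_i(\overline{X},D)=0$ for $i<d$), and the last surjection is Koll\'ar--Xu \cite[Lem.~26]{KX16}. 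Since your $X$ is a retract of $\mathbb{A}^n$, $\pi_1(X)=0$, hence $\pi_1(\mathbb{D}\partial X)=0$, and Hurewicz/Whitehead finish as you say. The hypothesis $d\geq 3$ is genuinely needed for the Lefschetz step, so the paper disposes of $d\leq 2$ separately by invoking the known Zariski cancellation in those dimensions \cite{AHE72,Fuj79,MS80}, which gives $X\isomorphic\mathbb{A}^d$ outright. (Alternatively, for $d\leq 2$ the dual boundary complex has dimension $\leq 1$, so $\mathbb{Z}$-acyclicity already forces contractibility; either route works.)
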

Here, a \emph{stably affine space} is an affine variety $Y$ such that $Y\times\mathbb{A}^{\ell}$ is an affine space for some $\ell\geq 0$.
Let's mention that the famous \emph{Zariski cancellation problem} for dimension at least $3$ in characteristic zero is still widely open, i.e. if $Y$ has complex dimension $d\geq 3$, is it always true that $Y\isomorphic \mathbb{C}^d$? On the other hand, however, there are counterexamples for each dimension $\geq 3$ in positive characteristics \cite{Gup14a,Gup14b}.

\section{Preliminaries on motives}

\subsection{Chow and geometric motives}\label{subsec:Chow_and_geometric_motives}
We first make some preparations.

\subsubsection{Chow motives}\label{sec:Chow_motives}

%We review the basics on Chow motives and introduce some notations:

We introduce some notations for Chow motives:
\begin{enumerate}[wide,labelwidth=!,labelindent=0pt]
\item
Let $\SmProj(\field)\subset\Sm(\field)\subset\Var(\field)$ be the category of smooth projective varieties, smooth varieties, and varieties over $\field$, respectively.
Let $\Cor_{\rat}(\field)$ be the category of (deg $0$) rational correspondences over $\field$, whose objects are the same as $\SmProj(\field)$, but with morphisms
\begin{equation}\label{eqn:rational_correspondences}
\Hom_{\Cor_{\rat}(\field)}(X,Y)=\oplus_{j\in J}\CH^{\dim Y_j}(Y_j \times X),
\end{equation}
where $Y_j, j\in J$ are the connected components of $Y$, and $\CH^{\bullet}$ stands for the Chow cohomology.

\item
Let $\Chow_{\rat}^{\eff}(\field)$ be the category of (covariant) effective pure Chow motives over $\field$, defined as the idempotent completion of $\Cor_{\rat}(\field)$.
Then, we have a canonical covariant graph functor
\[
\M_{\rat}:\SmProj(\field)\rightarrow \Chow_{\rat}^{\eff}(\field): X\mapsto \M_{\rat}(X)=(X,[\id_X]=[\Delta_X]),\quad (f:X\rightarrow Y)\mapsto [\Gamma_f],
\]

\item
The opposite category $\Chow_{\rat}^{\eff}(\field)^{\op}$ is termed as the category of \emph{contravariant effective pure Chow motives}.
Let $K^b(\Chow_{\rat}^{\eff}(\field)^{\op})=\Ho(\Coch^b(\Chow_{\rat}^{\eff}(\field)^{\op}))$ be the homotopy category of bounded cochain complexes in $\Chow_{\rat}^{\eff}(\field)^{\op}$.

\noindent{}To clarify our notations, we make the following \textbf{Convention} \setword{{\color{blue}$1$}}{convention:weight_complex}: 
\begin{itemize}[wide]
\item
By reversing the arrows, we can always identify $K^b(\Chow_{\rat}^{\eff}(\field)^{\op})$ with the homotopy category 
$K_b(\Chow_{\rat}^{\eff}(\field))=\Ho(\Ch_b(\Chow_{\rat}^{\eff}(\field)))$ of bounded chain complexes in $\Chow_{\rat}^{\eff}(\field)$:
\[
K_b(\Chow_{\rat}^{\eff}(\field))\isomorphic K^b(\Chow_{\rat}^{\eff}(\field)^{\op}): C\mapsto C^{\op},\quad C_i=(C^{\op})^i.
\]

\item
By negating the degrees, we have the following identification:
\[
K_b(\Chow_{\rat}^{\eff}(\field))\isomorphic K^b(\Chow_{\rat}^{\eff}(\field)): C\mapsto C^-,\quad C_i=(C^{-})^{-i}.
\]
\end{itemize}
\end{enumerate}

\subsubsection{Geometric motives}

We give a crash review on geometric motives \cite{Voe00,MVW06}:
\begin{enumerate}[wide,labelwidth=!,labelindent=0pt]
\item
Let $\FinCor(\field)$ be the category of finite correspondences over $\field$, and
$\PreSh(\FinCor(\field))$ be its category of abelian presheaves (\emph{presheaves with transfers}).
By Yoneda lemma, we obtain embeddings of $\Sm(\field)$ and $\FinCor(\field)$ into $\PreSh(\FinCor(\field))$:
\begin{equation}
\Sm(\field) \rightarrow \FinCor(\field)\rightarrow \PreSh(\FinCor(\field)): X\mapsto X\mapsto \mathbb{Z}_{\tr}(X).
\end{equation}

\item
Let $\Sh_{\Nis}(\FinCor(\field))$ be (abelian) category of \emph{Nisnevich sheaves with transfers} over $\field$.
Let $\DM_{\Nis}^{\eff,-}(\field)$ be the \emph{(tensor) triangulated category of effective (integral) motives over $\field$}, i.e. bounded above cochain complexes of Nisnevich sheaves with transfers, localized at 
\emph{$\mathbb{A}^1$-weak equivalences} generated by $\mathbb{Z}_{\tr}(X\times\mathbb{A}^1) \rightarrow \mathbb{Z}_{\tr}(X)$, $X\in\Sm(\field)$.
There are two functors
\begin{eqnarray}
&&\M:\Var(\field)\rightarrow \DM_{\Nis}^{\eff,-}(\field): X\mapsto \M(X)=\mathbb{Z}_{\tr}(X).~(\text{(effective) motives})\\
&&\M^c:\Var^{\prop}(\field)\rightarrow \DM_{\Nis}^{\eff,-}(\field): X\mapsto \M^c(X).~(\text{(effective) motives w/ compact supp.})
\end{eqnarray}
where $\Var^{\prop}(\field)$ denotes the full subcategory of $\Var(\field)$ with only proper morphisms.

%Also, $\M^c(X)$ is contravariant for \'{e}tale morphisms, and $\M(X)=\M^c(X)$ if $X\in\SmProj(\field)$.

\item
The \emph{(tensor) triangulated category $\DM_{\gm}^{\eff}(\field)$ of effective geometric motives over $\field$}
is the thick subcategory of $\DM_{\Nis}^{\eff,-}(\field)$ generated by $\M(X)$, $X\in\Sm(\field)$.
Recall that $\M^c(X)\in\DM_{\gm}^{\eff}(\field)$ for all $X\in\Var(\field)$ (see \cite[Cor.16.17]{MVW06}).

\item
Let $\mathbb{Z}(1):=\M^c(\mathbb{A}^1)[-2]=\coker(\M(\pt)\rightarrow\M(\mathbb{P}^1))[-2]\in\DM_{\gm}^{\eff}(\field)$. The \emph{Tate twist} functor is
\[
-\otimes\mathbb{Z}(1):\DM_{\Nis}^{\eff,-}(\field) \rightarrow \DM_{\Nis}^{\eff,-}(\field):\M\mapsto \M(1):=\M\otimes\mathbb{Z}(1).
\]
Denote $\M(m):=\M\otimes\mathbb{Z}(1)^{\otimes m}$. The \emph{(geometric) Lefschetz motive} is $\bfL:=\M^c(\mathbb{A}^1)=\mathbb{Z}(1)[2]$. 

\end{enumerate}

Among many other properties of effective geometric motives, we need the following:

\begin{theorem}[See e.g. \cite{MVW06}]\label{thm:geometric_motives}
The category $\DM_{\Nis}^{\eff,-}(\field)$ satisfies the following properties:
\begin{enumerate}[wide,labelwidth=!,labelindent=0pt]
\item% (1)
For all $X,Y\in\Var(\field)$, we have
\begin{eqnarray*}
&&\M(X\times\mathbb{A}^1)\isomorphic \M(X),\quad \M(X\times Y)\isomorphic \M(X)\otimes \M(Y).\\
&&\M^c(X\times\mathbb{A}^1)\isomorphic \M^c(X)(1)[2],\quad \M^c(X\times Y)\isomorphic \M^c(X)\otimes\M^c(Y).
\end{eqnarray*}

\item% (2)
(Triangle for $\M^c$) For any closed subvariety $i:Z\hookrightarrow X$ with open complement $j:U\hookrightarrow X$, there is an exact triangle:
\[
\M^c(Z)\xrightarrow[]{i_*} \M^c(X)\xrightarrow[]{j^*} \M^c(U)\rightarrow \M^c(Z)[1].
\]

\item% (3)
(Cancellation \cite{Voe10}) The Tate twist $-\otimes\mathbb{Z}(1):\DM_{\Nis}^{\eff,-}(\field) \rightarrow \DM_{\Nis}^{\eff,-}(\field)$ is \emph{fully faithful}.

 \item% (4)
(Chow motives) By \cite{Voe00}, there exists a covariant embedding
\begin{equation}
\iota:\Chow_{\rat}^{\eff}(\field)\rightarrow \DM_{\gm}^{\eff}(\field),
\end{equation}
such that $\iota(\M_{\rat}(X))=\M(X)=\M^c(X)$ for all $X\in\SmProj(\field)$.

\end{enumerate}
\end{theorem}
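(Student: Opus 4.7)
The plan is to assemble the theorem by citing foundational results of Voevodsky and collaborators, treating the four items in turn. For item (1), $\mathbb{A}^1$-homotopy invariance of $\M$ is essentially built into the construction, since $\DM_{\Nis}^{\eff,-}(\field)$ is defined by localizing at precisely the maps $\mathbb{Z}_{\tr}(X\times\mathbb{A}^1)\to \mathbb{Z}_{\tr}(X)$. The product formula $\M(X\times Y)\isomorphic \M(X)\otimes \M(Y)$ descends from the identification $\mathbb{Z}_{\tr}(X\times Y)\isomorphic \mathbb{Z}_{\tr}(X)\otimes_{\mathrm{tr}}\mathbb{Z}_{\tr}(Y)$ of presheaves with transfers (see \cite[Lec.~8]{MVW06}). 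For the compact-support versions, I would use Suslin--Voevodsky's representation of $\M^c$ by sheaves of equidimensional $0$-cycles $z_{\equi}(-,0)$, together with the tautological identification $\bfL=\M^c(\mathbb{A}^1)=\mathbb{Z}(1)[2]$, which then reduces the $\mathbb{A}^1$-product formula for $\M^c$ to the Künneth isomorphism for such sheaves of cycles.

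For items (2) and (4), I would invoke established results in the literature. The localization triangle (2) is precisely \cite[Thm.~16.15]{MVW06}: given a closed-open decomposition $Z\hookrightarrow X\hookleftarrow U$, one exhibits a short exact sequence of sheaves of equidimensional cycles (roughly, cycles on $U$ come from cycles on $X$ modulo those supported on $Z$) and passes to the derived category. The embedding $\iota$ in (4) is constructed in \cite[\S2.1]{Voe00}: a rational correspondence in $\CH^{\dim Y}(Y\times X)$ is sent to the class of a representing cycle in $\Hom_{\DM_{\gm}^{\eff}(\field)}(\M(X),\M(Y))$, and compatibility with composition reduces to the intersection-theoretic composition formula for correspondences. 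On smooth projective varieties one has $\M(X)=\M^c(X)$ because such $X$ are proper, so the functor identifies with $\M_{\rat}(X)$ on the nose.

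The main obstacle, and the deepest input, is item (3): Voevodsky's cancellation theorem \cite{Voe10}. Full faithfulness of $-\otimes\mathbb{Z}(1)$ requires a delicate analysis of $\mathbb{A}^1$-local complexes of Nisnevich sheaves with transfers and the so-called contraction functor $F\mapsto F_{-1}$ that inverts the Tate twist at the level of homotopy invariant sheaves. Since this theorem is standard but its proof is long and technical, I would cite it directly rather than attempt to re-derive it within a preliminary section; the statement feeds into all subsequent weight-complex constructions in this paper, where one needs a faithful copy of $\Chow_{\rat}^{\eff}(\field)$ sitting inside $\DM_{\gm}^{\eff}(\field)$.
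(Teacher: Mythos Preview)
Your proposal is appropriate, but note that the paper does not actually give a proof of this theorem: it is stated purely as a preliminary result imported from the literature, with the citations \cite{MVW06}, \cite{Voe10}, and \cite{Voe00} embedded directly in the statement and no accompanying proof text. So there is no ``paper's own proof'' to compare against---the paper treats all four items as black boxes.

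Your plan to assemble more specific references (e.g.\ \cite[Thm.~16.15]{MVW06} for the localization triangle, \cite[\S2.1]{Voe00} for the Chow embedding, and a direct citation of \cite{Voe10} for cancellation) is entirely consistent with this, and in fact more detailed than what the paper provides. The brief sketches you give for items (1), (2), and (4) are correct in spirit. Your remark that item (3) is the deepest input and should simply be cited is exactly the stance the paper takes as well.
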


When $X$ is smooth quasi-projective, the motive with compact support $\M^c(X)$ admits a concrete description in terms of smooth projective varieties. 
Suppose $X$ is of pure dimension $d$. We fix a log compactification $(\overline{X},\partial X)$ with \emph{very simple normal crossing} divisor $\partial X$, meaning that any finite intersection of the irreducible components of $\partial X$ is connected.
Say, $\partial X= \cup_{i=1}^n Y_i$ is the union of irreducible components.
For any subset $I\subset [n]=\{1,\text{\tiny$\cdots$},n\}$, denote
$Y_I:= \cap_{i\in I} Y_i$. Then, $Y_I\subset X$ is either empty or smooth connected.
For any integer $k\geq 1$, denote
\begin{equation}\label{eqn:strata_for_boundary_divisor}
Y^{(k)}:= \sqcup_{I\subset[n]:|I|=k} Y_I,
\end{equation}
and $Y^{(0)}=Y_{\emptyset}:=\overline{X}$. In particular, $\dim Y^{(k)}=\dim X - k =d-k$ (if nonempty). 

For any $1\leq j\leq k$, let $\delta_j:Y^{(k)}\rightarrow Y^{(k-1)}$ be the disjoint union of inclusions $Y_I\hookrightarrow Y_{I\setminus\{i_j\}}$, $I=\{i_1<\text{\tiny$\cdots$} <i_k\}\subset[n]$. Then
\begin{equation}\label{eqn:motivic_differential}
\partial=\partial^{(k)}:=\text{\tiny$\sum_{j=1}^k$}(-1)^{j-1}\delta_j: \M(Y^{(k)}) \rightarrow \M(Y^{(k-1)}).
\end{equation}
defines a morphism in $\DM_{\Nis}^{\eff,-}(\field)$.
It's direct to see that $\partial^2=0$.

In addition, observe that $\delta:\M(Y^{(1)})=\M^c(Y^{(1)})\rightarrow \M(\overline{X})=\M^c(\overline{X})$ factors through $\M^c(\partial X)$, 
so by Theorem \ref{thm:geometric_motives}.(2), the composition $\M(Y^{(1)})\xrightarrow[]{\partial} \M(\overline{X})=\M^c(\overline{X})\xrightarrow[]{\epsilon} \M^c(X)$ is zero.

\begin{lemma}\label{lem:motive_with_compact_support_of_a_smooth_quasi-projective_variety}
The total complex of
\[
\M(Y^{(d)})\xrightarrow[]{\partial} \text{\tiny$\cdots$}\xrightarrow[]{\partial} \M(Y^{(1)})\xrightarrow[]{\partial} \M(\overline{X}) \xrightarrow[]{\epsilon} \M^c(X)
\]
is acyclic in $\DM_{\Nis}^{\eff,-}(\field)$. In other words, $\M^c(X)$ is naturally isomorphic to the cochain complex
\[
\M(Y^{(\bullet)}):=[\M(Y^{(d)})\xrightarrow[]{\partial} \text{\tiny$\cdots$}\xrightarrow[]{\partial} \M(Y^{(1)})\xrightarrow[]{\partial} \M(Y^{(0)})],
\]
where $\M(Y^{(k)})$ lies in degree $-k$.
\end{lemma}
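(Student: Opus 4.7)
The plan is to argue by induction on the number $n$ of irreducible components of $\partial X$, with the localization triangle (Theorem \ref{thm:geometric_motives}(2)) as the engine. When $n=0$, we have $\partial X = \emptyset$ and $X = \overline{X}$ is smooth projective, so $\M(Y^{(k)}) = 0$ for $k\geq 1$ and the complex collapses to the identity $\M(\overline{X}) \xrightarrow{\epsilon} \M^c(X)$ (using $\M^c(X) = \M(X) = \M(\overline{X})$ from Theorem \ref{thm:geometric_motives}(4)).

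For the inductive step, assume the claim when the boundary has at most $n-1$ components. Introduce the auxiliary open subvariety $X' := \overline{X} \setminus (Y_1 \cup \cdots \cup Y_{n-1})$, which is smooth quasi-projective with (very simple SNC) log compactification $(\overline{X}, Y_1\cup\cdots\cup Y_{n-1})$ of $n-1$ components, and the smooth closed subvariety $Y_n^{\circ} := Y_n \setminus \bigcup_{i<n}(Y_i \cap Y_n) \subset X'$, with (very simple SNC) log compactification $(Y_n, \bigcup_{i<n} Y_{\{i,n\}})$ of at most $n-1$ components. Since $X = X'\setminus Y_n^{\circ}$, Theorem \ref{thm:geometric_motives}(2) yields the localization triangle
$$\M^c(Y_n^{\circ}) \to \M^c(X') \to \M^c(X) \to \M^c(Y_n^{\circ})[1],$$
while the inductive hypothesis provides identifications $\M^c(X') \isomorphic \M(Z^{(\bullet)})$ with $Z^{(k)} := \sqcup_{I\subset[n-1],\,|I|=k} Y_I$, and $\M^c(Y_n^{\circ}) \isomorphic \M(W^{(\bullet)})$ with $W^{(k)} := \sqcup_{I\subset[n-1],\,|I|=k} Y_{I\cup\{n\}}$.

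The key combinatorial observation is that each stratum $Y_I$, $I\subset[n]$, lies in $Z^{(|I|)}$ when $n\notin I$ and in $W^{(|I|-1)}$ when $n\in I$, giving the direct sum decomposition $\M(Y^{(k)}) = \M(Z^{(k)}) \oplus \M(W^{(k-1)})$. Unpacking \eqref{eqn:motivic_differential}, the component $\M(W^{(k-1)}) \to \M(Z^{(k-1)})$ of $\partial^{(k)}$ is, up to sign, the face map removing the index $n$ (induced by the inclusion $Y_{I\cup\{n\}}\hookrightarrow Y_I$), while the remaining components respect the summand splitting. This exhibits $\M(Y^{(\bullet)})$ as the mapping cone of a natural morphism $\M(W^{(\bullet)}) \to \M(Z^{(\bullet)})$, yielding a distinguished triangle
$$\M(W^{(\bullet)}) \to \M(Z^{(\bullet)}) \to \M(Y^{(\bullet)}) \to \M(W^{(\bullet)})[1]$$
in $\DM_{\Nis}^{\eff,-}(\field)$. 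Matching this triangle with the localization triangle via the inductive isomorphisms and invoking the triangulated two-out-of-three principle then forces $\M(Y^{(\bullet)}) \isomorphic \M^c(X)$, completing the induction.

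The main obstacle is the naturality step: one must confirm that the inductive isomorphisms $\M^c(X') \isomorphic \M(Z^{(\bullet)})$ and $\M^c(Y_n^{\circ}) \isomorphic \M(W^{(\bullet)})$ intertwine the pushforward $\M^c(Y_n^{\circ}) \to \M^c(X')$ (induced by the closed embedding $Y_n^{\circ}\hookrightarrow X'$) with the face map $\M(W^{(\bullet)}) \to \M(Z^{(\bullet)})$. This calls for upgrading the inductive statement to a natural isomorphism of functors along closed embeddings of log-compactified pairs, together with careful sign bookkeeping for the \v{C}ech-type differential \eqref{eqn:motivic_differential}.
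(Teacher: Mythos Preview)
Your proposal is correct and follows essentially the same route as the paper: induction on the number $n$ of boundary components, removing the last component to form $X'$ and $Y_n^\circ$, applying the inductive hypothesis to each, recognizing $\M(Y^{(\bullet)})$ as a cone via the splitting $Y^{(k)}=Z^{(k)}\sqcup W^{(k-1)}$, and then comparing with the localization triangle for $Y_n^\circ\hookrightarrow X'$.

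The one noteworthy difference is how the compatibility you flag as an ``obstacle'' is handled. The paper sidesteps the two-out-of-three step by working directly with the \emph{augmented} complexes: it writes down the commutative ladder
\[
\begin{tikzcd}[row sep=1.2pc, column sep=1.2pc]
\M(Z^{(\bullet)})\arrow[r,"\epsilon"] & \M^c(X')\\
\M(W^{(\bullet)})\arrow[r,"\epsilon"]\arrow[u,"\delta_\bullet"] & \M^c(Y_n^\circ)\arrow[u,"i'_*"]
\end{tikzcd}
\]
whose rows are acyclic by induction, and concludes that the induced map on vertical cones is an isomorphism $\M(Y^{(\bullet)})=\cone(\delta_\bullet)\xrightarrow{\sim}\cone(i'_*)\cong\M^c(X)$. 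The commutativity of this ladder is immediate from functoriality of $\M^c$ for the proper map of pairs $(Y_n,\,Y_n\cap Y')\hookrightarrow(\overline{X},Y')$, so no separate ``upgraded naturality statement'' is needed. Your two-out-of-three argument is equivalent, but the paper's packaging makes the required compatibility transparent rather than something to be checked afterward.
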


\begin{proof}
We prove by induction on $n$. The case $n=1$ is Theorem \ref{thm:geometric_motives}.(2). For the induction, suppose the statement holds for `$<n$', we prove the case `$n$'.

Denote $Y:=\partial X=\cup_{i=1}^n Y_i\subset \overline{X}$, $Y':=\cup_{i=1}^{n-1}\subset Y\subset\overline{X}$, and $X':=\overline{X}-Y'$. Define $Y'^{(k)}:=\sqcup_{I\subset[n-1],|I|=k} Y_I$.
Consider the following commutative diagram in $\DM_{\Nis}^{\eff,-}(\field)$
\[
\begin{tikzcd}[row sep=1.5pc, column sep=1.5pc]
\M(Y'^{(d)})\arrow[r,"{\partial}"] & \M(Y'^{(d-1)})\arrow[r,"{\partial}"] & \text{\tiny$\cdots$}\arrow[r,"{\partial}"] & \M(Y'^{(1)})\arrow[r,"{\partial}"] & \M(Y^{(0)})\arrow[r,"{\epsilon}"] & \M^c(X')\\
& \M(Y_n\cap Y'^{(d-1)})\arrow[u,"{\delta_d}"]\arrow[r,"{\partial}"] & \text{\tiny$\cdots$}\arrow[r,"{\partial}"] & \M(Y_n\cap Y'^{(1)})\arrow[r,"{\partial}"]\arrow[u,"{\delta_2}"] & \M(Y_n)\arrow[r,"{\epsilon}"]\arrow[u,"{\delta_1}"] & \M^c(Y_n\setminus Y')\arrow[u,"{i'_*}"]
\end{tikzcd}
\]
with each row a cochain complex. By inductive hypothesis, both of the two rows are acyclic in $\DM_{\Nis}^{\eff,-}(\field)$. In short, we may rewrite the commutative diagram as
\[
\begin{tikzcd}[row sep=1.5pc]
\M(Y'^{\bullet})\arrow[r,"{\epsilon}"] & \M^c(X')\\
\M(Y_n\cap Y'^{\bullet})\arrow[r,"{\epsilon}"]\arrow[u,"{\delta_{\bullet}}"] & \M^c(Y_n\setminus Y')\arrow[u,"{i'_*}"]
\end{tikzcd}
\]
In particular, the induced morphism $\epsilon:\cone(\delta_{\bullet})\rightarrow \cone(i'_*)$ is an isomorphism in $\DM_{\Nis}^{\eff,-}(\field)$.

Notice that $Y^{(k)}=Y'^{(k)}\sqcup (Y_n\cap Y'^{(k-1)})$, hence $\M(Y^{(k)})=\M(Y'^{(k)})\oplus\M(Y_n\cap Y'^{(k-1)})$. We then see that
$
\M(Y^{\bullet})=\cone(\delta_{\bullet}).
$
By Theorem \ref{thm:geometric_motives}.(2), we have a distinguished triangle in $\DM_{\Nis}^{\eff,-}(\field)$:
\[
\M^c(Y_n\setminus Y')\xrightarrow[]{i'_*} \M^c(X')\rightarrow \M^c(X) \rightarrow \M^c(Y_n\setminus Y')[1].
\]
This means the induced isomorphism is in fact
\[
\epsilon:\M(Y^{\bullet})=\cone(\delta_{\bullet})\xrightarrow[]{\isomorphic} \cone(i'_*)\isomorphic \M^c(X).
\]
We're done.
\end{proof}

\subsection{Motivic weight complexes}

We recall Gillet-Soul\'{e}'s motivic weight complexes. 

\begin{theorem}[{\cite[Thm.2]{GS96}}]\label{thm:weight_complex}
For any $\field$-variety $X$, there exists a \textbf{weight complex} $\weightcomplex(X)\in K_b(\Chow_{\rat}^{\eff}(\field))$, well-defined up to canonical isomorphism, such that:
\begin{enumerate}[wide,labelwidth=!,labelindent=0pt,label=(\roman*)]
\item
$\weightcomplex(X)$ is isomorphic to a bounded chain complex of the form
\[
\M_{\rat}(X_k)\rightarrow\text{\tiny$\cdots$}\rightarrow \M_{\rat}(X_1)\rightarrow \M_{\rat}(X_0),\quad X_i\in\SmProj(\field),
\]
where $\M_{\rat}(X_i)$ is in degree $i$, and $\dim(X_i)\leq \dim X -i$. In particular, $k\leq\dim X$.

\noindent{}In addition, if $X\in\SmProj(\field)$, then $\weightcomplex(X)=\M_{\rat}(X)$.

\item
$\weightcomplex(X)$ is covariant for proper morphisms, and contravariant for open inclusions.

\item
For any open subset $j:U\hookrightarrow X$ with closed complement $i:Z\hookrightarrow X$, there is a canonical triangle in $K_b(\Chow_{\rat}^{\eff}(\field))$:
\[
\weightcomplex(Z)\xrightarrow[]{i_*} \weightcomplex(X)\xrightarrow[]{j^*} \weightcomplex(U)\rightarrow \weightcomplex(Z)[1].
\]

\item
Any cover $X=A\cup B$ by two closed subvarieties gives a canonical triangle in $K_b(\Chow_{\rat}^{\eff}(\field))$:
\[
\weightcomplex(A\cap B)\rightarrow \weightcomplex(A)\oplus\weightcomplex(B)\rightarrow \weightcomplex(X)\rightarrow \weightcomplex(A\cap B)[1].
\]
Any cover $X=U\cup V$ by  two open subsets gives a canonical triangle in $K_b(\Chow_{\rat}^{\eff}(\field))$:
\[
\weightcomplex(X)\rightarrow \weightcomplex(U)\oplus\weightcomplex(V)\rightarrow \weightcomplex(U\cap V)\rightarrow \weightcomplex(X)[1].
\]

\item
For any $X,Y\in\Var(\field)$, have
\[
\weightcomplex(X\times Y)=\weightcomplex(X)\otimes\weightcomplex(Y).
\]
\end{enumerate}
\end{theorem}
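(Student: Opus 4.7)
The plan is to build $\weightcomplex(X)$ by descending from smooth projective varieties through resolution of singularities, in the spirit of Deligne's weight filtration on cohomology lifted to the motivic level. I would first declare $\weightcomplex(X):=\M_{\rat}(X)$ for $X\in\SmProj(\field)$, handling the last clause of (i). For $X$ smooth quasi-projective, pick a log compactification $(\overline{X},\partial X)$ with simple normal crossing divisor $\partial X=\cup_{i=1}^n Y_i$ and take $\weightcomplex(X)$ to be the chain complex in $\Chow_{\rat}^{\eff}(\field)$ whose term in degree $k$ is $\M_{\rat}(Y^{(k)})$ (in the notation of \eqref{eqn:strata_for_boundary_divisor}), with alternating-sum differential modeled on \eqref{eqn:motivic_differential}. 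This lifts to the Chow level the description of $\M^c(X)$ from Lemma \ref{lem:motive_with_compact_support_of_a_smooth_quasi-projective_variety} via the embedding $\iota$ of Theorem \ref{thm:geometric_motives}.(4), and it visibly satisfies the dimension bound in (i).

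For an arbitrary (possibly singular) $X$, the strategy is to construct a cubical hyperresolution by smooth projective varieties using Hironaka's resolution: resolve $X$ to obtain a proper surjection $X'\to X$ with $X'$ smooth projective, resolve the singular and exceptional loci on both sides, and iterate to obtain a finite cubical diagram of length $\leq \dim X$. I would then set $\weightcomplex(X)$ equal to the associated total complex in $K_b(\Chow_{\rat}^{\eff}(\field))$, with the bound on $\dim(X_i)$ enforced by the shape of the hyperresolution.

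The key technical step, and the main obstacle, is proving well-definedness and functoriality. My approach is to invoke the Guill\'en-Navarro-Puerta acyclicity criterion, which says that a functor from $\SmProj(\field)$ to a pointed triangulated category satisfying a blow-up excision axiom extends uniquely to all varieties via cubical hyperresolutions. Concretely, any two hyperresolutions of $X$ are dominated by a common one, and the induced maps of total complexes are chain homotopy equivalences; passing to $K_b$ absorbs the choices and yields a canonical object. The same argument delivers functoriality for (ii): any proper (resp.\ open) morphism $f:X\to Y$ lifts to a map of hyperresolutions after refinement, and different lifts give chain-homotopic maps. This is the delicate point, since morphisms of varieties do not admit canonical lifts on the nose, and the whole construction rests on taming this indeterminacy by working in the homotopy category.

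With well-definedness and functoriality in hand, the remaining axioms fall out of compatible choices. For (iii), a hyperresolution of $X$ restricts to hyperresolutions of $Z$ and $U$, so the short exact sequence of total complexes in $\Ch_b(\Chow_{\rat}^{\eff}(\field))$ descends to the distinguished triangle in $K_b$. The Mayer-Vietoris triangles in (iv) follow the same pattern, applied to $A\cup B$ and $U\cup V$ respectively. For (v), a product of cubical hyperresolutions is a hyperresolution of the product, so both $\weightcomplex(X\times Y)$ and $\weightcomplex(X)\otimes\weightcomplex(Y)$ are computed by the same double complex. Everything hinges on getting well-definedness and functoriality right through the hyperresolution machinery.
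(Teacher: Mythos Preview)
The paper does not prove this theorem: it is quoted verbatim from Gillet--Soul\'e \cite[Thm.~2]{GS96} and used as a black box, with no proof environment following the statement. There is therefore nothing in the paper to compare your proposal against.

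That said, your sketch is a faithful outline of the actual Gillet--Soul\'e argument. Their construction proceeds exactly by cubical hyperresolutions \`a la Guill\'en--Navarro Aznar--Pascual Gainza--Puerta: one declares $\weightcomplex(X)=\M_{\rat}(X)$ on $\SmProj(\field)$, and for general $X$ takes the simple complex of a cubical hyperresolution, with well-definedness and functoriality established via the cohomological descent/acyclicity criterion you name. The explicit description you give in the smooth quasi-projective case is precisely Proposition~\ref{prop:weight_complex_of_a_smooth_quasi-projective_variety} (i.e.\ \cite[Prop.~3]{GS96}). One small historical caveat: the clean ``extension criterion'' formulation you invoke is later than \cite{GS96}; the original paper works directly with the hyperresolution machinery rather than appealing to an abstract extension theorem, though the content is the same.

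If you intend this as an actual proof rather than a pointer to the literature, the place where real work remains is exactly where you flag it: showing that two cubical hyperresolutions of $X$ yield homotopy-equivalent total complexes in $\Ch_b(\Chow_{\rat}^{\eff}(\field))$, and that morphisms lift compatibly up to chain homotopy. This is not a one-line consequence of ``common refinement'' and requires the full descent apparatus; your sketch correctly identifies this as the crux but does not carry it out.
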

In \cite{GS96}, the notation corresponds to the weight cochain complex $\weight(X)=\weightcomplex(X)^{\op}\in K^b(\Chow_{\rat}^{\eff}(\field)^{\op})$.
Another notation is $\weightcomplex^-(X)=(\weightcomplex(X))^-\in K^b(\Chow_{\rat}^{\eff}(\field))$.

The weight complex has an alternative description via Bondarko's weight complex functor.

\begin{theorem}\cite[3.3.1,6.3.1,6.4.2,6.6.2]{Bon09}\label{thm:weight_complex_functor}
There is a \emph{conservative} exact functor
\begin{equation}
t:\DM_{\gm}^{\eff}(\field)\rightarrow K^b(\Chow_{\rat}^{\eff}(\field)),
\end{equation}
called the \textbf{weight complex functor},
such that: 
\begin{enumerate}[wide,labelwidth=!,labelindent=0pt,label=(\alph*)]
\item
The composition $t\circ\iota:\Chow_{\rat}^{\eff}(\field)\hookrightarrow \DM_{\gm}^{\eff}(\field)\rightarrow K^b(\Chow_{\rat}^{\eff}(\field))$ is the obvious inclusion. 

\item
$t$ induces an isomorphism on the Grothendieck rings:
\[
K_0(t):K_0(\DM_{\gm}^{\eff}(\field))\xrightarrow[]{\isomorphic} K_0(\Chow_{\rat}^{\eff}(\field)).
\]

\item
There exists a natural isomorphism
\[
t\circ\M^c\isomorphic \weightcomplex^-:\Var(\field)\rightarrow K^b(\Chow_{\rat}^{\eff}(\field)).
\]
Here, by \textbf{Convention} \ref{convention:weight_complex}, $\weightcomplex^-(X)=(\weightcomplex(X))^-$.
\end{enumerate}
\end{theorem}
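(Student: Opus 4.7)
The plan is to work in Bondarko's framework of \emph{weight structures} on triangulated categories. A weight structure on a triangulated category $\mathcal{T}$ is a pair of subcategories $(\mathcal{T}^{w\leq 0},\mathcal{T}^{w\geq 0})$ satisfying axioms formally dual to those of a $t$-structure; its heart is the additive category $\mathcal{H}:=\mathcal{T}^{w\leq 0}\cap\mathcal{T}^{w\geq 0}$. The first step is to equip $\DM_{\gm}^{\eff}(\field)$ with a weight structure whose heart recovers $\iota(\Chow_{\rat}^{\eff}(\field))$. I would take $\mathcal{T}^{w\leq 0}$ (resp.\ $\mathcal{T}^{w\geq 0}$) to be the smallest extension-closed, retract-closed subcategory generated by $\M(X)[j]$ with $X\in\SmProj(\field)$ and $j\geq 0$ (resp.\ $j\leq 0$), and verify the orthogonality axiom using the vanishing $\Hom_{\DM_{\gm}^{\eff}(\field)}(\M(X),\M(Y)[i])=0$ for $X,Y\in\SmProj(\field)$ and $i>0$; this is a standard consequence of the identification of such $\Hom$-groups with motivic cohomology, combined with Voevodsky's cancellation (Theorem \ref{thm:geometric_motives}.(3)).

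Given such a weight structure, for any $M\in\DM_{\gm}^{\eff}(\field)$ one constructs a \emph{weight Postnikov tower}: a bounded sequence of exact triangles presenting $M$ as an iterated extension of shifted heart-objects $H^i\in\Chow_{\rat}^{\eff}(\field)$. The connecting morphisms assemble into a bounded cochain complex $[\cdots\to H^i\to H^{i+1}\to\cdots]$, unique up to chain homotopy, which defines $t(M)\in K^b(\Chow_{\rat}^{\eff}(\field))$. Property (a) is then immediate: an object already in the heart admits the trivial tower. Conservativity follows from the structural fact that a tower with zero class in $K^b$ can only arise when all connecting morphisms split, an argument using the idempotent completeness of $\Chow_{\rat}^{\eff}(\field)$ to collapse the tower and force $M\isomorphic 0$.

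Property (b) is then formal. Any bounded weight structure gives an isomorphism $K_0(\mathcal{T})\isomorphic K_0(\mathcal{H})$ via the Euler characteristic $[M]\mapsto\sum_i (-1)^i [H^i]$ extracted from the weight tower; applied to $\mathcal{T}=\DM_{\gm}^{\eff}(\field)$ this exhibits $K_0(t)$ as the inverse of the map induced by the inclusion $\iota$, hence an isomorphism.

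The main obstacle is property (c), matching $t\circ\M^c$ with Gillet-Soul\'{e}'s $\weightcomplex^-$. My strategy is to build the natural isomorphism on a convenient subclass and then propagate. For smooth quasi-projective $X$ admitting a very simple normal crossing log compactification, Lemma \ref{lem:motive_with_compact_support_of_a_smooth_quasi-projective_variety} presents $\M^c(X)$ explicitly as the total complex of motives of the smooth projective strata $Y^{(k)}$; this presentation visibly realizes a weight Postnikov tower, so $t(\M^c(X))$ is canonically represented by $[\M_{\rat}(Y^{(d)})\to\cdots\to\M_{\rat}(Y^{(0)})]$ in $K^b(\Chow_{\rat}^{\eff}(\field))$. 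The same complex canonically represents $\weightcomplex^-(X)$ by iterated application of the closed-open triangle of Theorem \ref{thm:weight_complex}.(iii) together with the normalization in Theorem \ref{thm:weight_complex}.(i) on smooth projective values. For arbitrary $\field$-varieties, resolution of singularities and induction on dimension extend the natural isomorphism, matching the closed-open triangles of Theorem \ref{thm:geometric_motives}.(2) against Theorem \ref{thm:weight_complex}.(iii); keeping these compatibilities genuinely natural (not just levelwise) is the technically delicate point, which in Bondarko's treatment is handled by a detailed analysis of functoriality of weight Postnikov towers.
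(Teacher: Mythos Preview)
The paper does not prove this theorem: it is stated with an explicit citation to Bondarko \cite[3.3.1, 6.3.1, 6.4.2, 6.6.2]{Bon09} and then used as a black box, together with Lemma \ref{lem:motive_with_compact_support_of_a_smooth_quasi-projective_variety}, to give the alternative derivation of Proposition \ref{prop:weight_complex_of_a_smooth_quasi-projective_variety}. So there is no in-paper proof to compare your proposal against.

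That said, your outline is a faithful sketch of Bondarko's own argument: the Chow weight structure on $\DM_{\gm}^{\eff}(\field)$ generated by motives of smooth projective varieties, weight Postnikov towers defining $t$, the $K_0$ isomorphism from boundedness of the weight structure, and the comparison with Gillet--Soul\'e. Your treatment of part (c) in the smooth quasi-projective case, reading off a weight Postnikov tower for $\M^c(X)$ directly from Lemma \ref{lem:motive_with_compact_support_of_a_smooth_quasi-projective_variety}, is exactly the observation the paper records just before Proposition \ref{prop:weight_complex_of_a_smooth_quasi-projective_variety}. The extension to arbitrary varieties by resolution and induction on dimension, matching the localization triangles on both sides while preserving naturality, is indeed the delicate step; in Bondarko's treatment this is absorbed into the general machinery of weak functoriality of weight decompositions rather than argued case-by-case as you suggest, but the underlying content is the same.
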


Observe that Lemma \ref{lem:motive_with_compact_support_of_a_smooth_quasi-projective_variety} and Theorem \ref{thm:weight_complex_functor} 
give an alternative proof of the following
\begin{proposition}[{\cite[Prop.3]{GS96}}]\label{prop:weight_complex_of_a_smooth_quasi-projective_variety}
If $X$ is a smooth quasi-projective $\field$-variety of pure dimension $d$ as in Lemma \ref{lem:motive_with_compact_support_of_a_smooth_quasi-projective_variety}, then $\weightcomplex(X)$ is isomorphic to the following chain complex in $K_b(\Chow_{\rat}^{\eff}(\field))$:
\[
\M_{\rat}(Y^{(d)})\xrightarrow[]{\partial} \text{\tiny$\cdots$}\xrightarrow[]{\partial} \M_{\rat}(Y^{(1)})\xrightarrow[]{\partial} \M_{\rat}(\overline{X}),
\]
where $M_{\rat}(Y^{(k)})$ is concentrated in homological degree $k$.
\end{proposition}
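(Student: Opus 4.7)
The plan is to derive the description of $\weightcomplex(X)$ by transporting the isomorphism of Lemma \ref{lem:motive_with_compact_support_of_a_smooth_quasi-projective_variety} along the weight complex functor of Theorem \ref{thm:weight_complex_functor}.

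First, I would invoke Theorem \ref{thm:weight_complex_functor}.(c) to obtain a natural isomorphism $\weightcomplex^-(X)\isomorphic t(\M^c(X))$ in $K^b(\Chow_{\rat}^{\eff}(\field))$. Next, by Lemma \ref{lem:motive_with_compact_support_of_a_smooth_quasi-projective_variety}, $\M^c(X)$ is canonically isomorphic in $\DM_{\Nis}^{\eff,-}(\field)$ to the total complex of $\M(Y^{(\bullet)})$, viewed as an iterated cone built from the differentials $\partial$ of \eqref{eqn:motivic_differential}, with $\M(Y^{(k)})$ placed in cohomological degree $-k$. Because $\overline{X}$ is smooth projective and each nonempty $Y_I$ is a smooth closed subvariety (being a transverse intersection of the smooth components of the simple normal crossing divisor $\partial X$), every $Y^{(k)}$ lies in $\SmProj(\field)$. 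By Theorem \ref{thm:geometric_motives}.(4) and Theorem \ref{thm:weight_complex_functor}.(a), we therefore have $\M(Y^{(k)})=\iota(\M_{\rat}(Y^{(k)}))$ and $t(\iota(\M_{\rat}(Y^{(k)})))=\M_{\rat}(Y^{(k)})$, so $t$ acts on objects exactly by replacing $\M$ by $\M_{\rat}$.

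Then, since $t$ is an exact functor between triangulated categories, applying it to the isomorphism of Lemma \ref{lem:motive_with_compact_support_of_a_smooth_quasi-projective_variety} commutes with the iterated cone construction. Concretely, $t$ sends each individual differential $\partial:\M(Y^{(k)})\rightarrow \M(Y^{(k-1)})$ to the corresponding morphism $\partial:\M_{\rat}(Y^{(k)})\rightarrow \M_{\rat}(Y^{(k-1)})$ in $\Chow_{\rat}^{\eff}(\field)$ (the graph functor $\M_{\rat}$ is the restriction of $\M$ to smooth projective varieties, via $\iota$), and it takes the iterated cone assembled from these maps to the iterated cone assembled from their images. The latter is, by definition, the bounded cochain complex $[\M_{\rat}(Y^{(d)})\xrightarrow{\partial}\text{\tiny$\cdots$}\xrightarrow{\partial}\M_{\rat}(\overline{X})]$ with $\M_{\rat}(Y^{(k)})$ in cohomological degree $-k$. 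Finally, applying Convention \ref{convention:weight_complex} (negating the degrees) converts this into the desired chain complex with $\M_{\rat}(Y^{(k)})$ in homological degree $k$, which is then identified with $\weightcomplex(X)$.

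The main technical point I would expect to belabor is the compatibility of $t$ with total complex / iterated cone formation: a priori the object $\M(Y^{(\bullet)})$ is specified only up to quasi-isomorphism in $\DM_{\Nis}^{\eff,-}(\field)$, and one needs to justify that applying a triangulated functor to such an iterated cone yields the iterated cone of the image morphisms in $K^b(\Chow_{\rat}^{\eff}(\field))$. This is a general formal fact, but it is the only nontrivial ingredient beyond unwinding definitions, and is precisely what makes this an ``alternative proof'' as opposed to a re-derivation of the inductive Gillet--Soulé construction via the Mayer--Vietoris and closed-open triangles in Theorem \ref{thm:weight_complex}.(iii)--(iv).
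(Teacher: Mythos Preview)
Your proposal is correct and matches the paper's approach exactly: the paper simply states that Lemma \ref{lem:motive_with_compact_support_of_a_smooth_quasi-projective_variety} together with Theorem \ref{thm:weight_complex_functor} give an alternative proof, and you have spelled out precisely how this works by applying the exact functor $t$ to the iterated-cone presentation of $\M^c(X)$ and invoking $t\circ\iota=\mathrm{incl}$ on each $\M(Y^{(k)})$. Your closing caveat about compatibility of $t$ with iterated cones is the only point the paper leaves implicit, and your treatment of it is appropriate.
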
 
\noindent{}Alternatively by \textbf{Convention} \ref{convention:weight_complex}, $\weight(X)=\weightcomplex(X)^{\op}$ is the cochain complex in $K^b(\Chow_{\rat}^{\eff}(\field)^{\op})$:
\begin{equation}
\M_{\rat}(\overline{X}) \xrightarrow[]{\partial^*} \M_{\rat}(Y^{(1)}) \xrightarrow[]{\partial^*}\text{\tiny$\cdots$} \xrightarrow[]{\partial^*} \M_{\rat}(Y^{(d)}),
\end{equation}
where $M_{\rat}(Y^{(k)})$ is concentrated in cohomological degree $k$.

\subsection{Weight cohomology with compact support}

Let $\realization:\Chow_{\rat}^{\eff}(\field)\rightarrow\cA$ be a contravariant additive functor into an abelian category, i.e. $\realization:\Chow_{\rat}^{\eff}(\field)^{\op}\rightarrow\cA$ is covariant.

\begin{definition}[Weight cohomology with compact support]
For any $X\in\Var(\field)$, the \emph{$a$-th weight cohomology with compact support} of $X$ associated to $\realization$ is
\[
H_{\weight,c}^a(X,\realization):=H^a(\realization(\weightcomplex(X)^{\op}))=H^a(\realization(\weight(X)))\in\cA.
\]
\end{definition}
By definition, the weight cohomology with compact support satisfies the following properties:
\begin{enumerate}[wide,labelwidth=!,labelindent=0pt]
\item
$H_{\weight,c}^a(X,\realization)$ is contravariant in $X$, and equals zero for $a<0$ or $a>\dim X$.

\item
For $X\in\SmProj(\field)$,
$H_{\weight,c}^a(X,\realization)=\realization(\M_{\rat}(X))$ for $a=0$, and vanishes for all $a\neq 0$.

\item
If $i:Z\hookrightarrow X$ is a closed subvariety with open complement $j:U\hookrightarrow X$, then there is a long exact sequence
\[
\cdots\rightarrow H_{\weight,c}^a(U,\realization)\xrightarrow[]{j_*} H_{\weight,c}^a(X,\realization)\xrightarrow[]{i^*} H_{\weight,c}^a(Z,\realization)\rightarrow H_{\weight,c}^{a+1}(U,\realization)\rightarrow\cdots.
\]
\end{enumerate}

The main example of interest is the singular cohomology with compact support:
Let $\field=\mathbb{C}$, and $b\in\mathbb{N}$.
For any $X\in\SmProj(\field)$ ($\field=\mathbb{C}$), let $H^*(X):=H^*(X(\mathbb{C}),\mathbb{Z})$ denote the singular cohomology with integer coefficients.

\begin{definition}[Singular weight cohomology with compact support]\label{def:Betti_weight_cohomology_with_compact_support}~
\begin{enumerate}[wide,labelwidth=!,labelindent=0pt,label=\roman*)]
\item
The \emph{$b$-th singular/Betti cohomology (with integer coefficients)} of effective pure Chow motives is 
the contravariant additive functor into abelian groups:
\begin{equation}
\realization_B^b:\Chow_{\rat}^{\eff}(\field)\rightarrow \cA=\Ab: \realization_B^b(X,p):=p^*H^b(X).
\end{equation}
In particular, for any $X\in\SmProj(\field)$, we have $\realization_B^b(X)=\realization_B^b(X,\id_X)=H^b(X)$.
By definition, $\realization_B^b(X,p)$ is always a finitely generated abelian group.

\item
For any $X\in\Var(\field)$, its \emph{$(a,b)$-th singular/Betti weight cohomology with compact support} is:
\[
H_{\weight,c}^{a,b}(X;\mathbb{Z}) := H_{\weight,c}^a(X,\realization_B^b)=H^a(\realization_B^b(W(X)))\in\Ab.
\]
For any commutative ring $A$,  $H_{\weight,c}^{a,b}(X;A)$ is defined similarly.
\end{enumerate}
\end{definition}

\begin{lemma}[{\cite[Thm.3]{GS96}}]\label{lem:weight_cohomology_vs_weight_filtration}
There is a canonical \emph{cohomological descent spectral sequence}
\[
E_2^{a,b}=H_{\weight,c}^{a,b}(X;A)~\Rightarrow~H_c^{a+b}(X(\mathbb{C});A).
\]
So, it defines a canonical increasing weight filtration on $H_c^k(X(\mathbb{C});A)$. When $A=\mathbb{Q}$, the spectral sequence degenerates at $E_2$ and we recover
Deligne's weight filtration:
\[
H_{\weight,c}^{a,b}(X;\mathbb{Z})\otimes\mathbb{Q} = \Gr_b^{\weight}H_c^{a+b}(X(\mathbb{C});\mathbb{Q}).
\]
\end{lemma}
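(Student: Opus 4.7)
The plan is to realize the claimed spectral sequence as the hypercohomology spectral sequence associated to the termwise Betti realization of the weight complex, and then to identify it with Deligne's weight spectral sequence for the rational degeneration statement.

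Using Theorem \ref{thm:weight_complex}(i), I first represent $\weight(X)$ by a concrete bounded cochain complex $[\M_{\rat}(X_0) \to \M_{\rat}(X_1) \to \cdots \to \M_{\rat}(X_d)]$ with $X_a \in \SmProj(\field)$. Lifting the contravariant additive functor $\realization_B^b: \Chow_{\rat}^{\eff}(\field) \to \Ab$ to a chain-level functor $\realization_B^\bullet$ valued in cochain complexes (where, on objects, $\realization_B^\bullet(X, \id_X) = C^\bullet(X(\mathbb{C}); A)$) and applying it termwise to $\weight(X)$ yields a first-quadrant double complex $K^{a,b} = C^b(X_a(\mathbb{C}); A)$, bounded in both directions. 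The standard spectral sequence obtained by taking the vertical (internal singular-cochain) differential first has
\[
E_2^{a,b} = H^a\bigl(\realization_B^b(\weight(X))\bigr) = H_{\weight,c}^{a,b}(X;A).
\]

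Next, I identify the abutment with $H_c^*(X(\mathbb{C}); A)$. By Theorem \ref{thm:weight_complex_functor}(c), $\weight(X)$ corresponds under the weight complex functor $t$ to the motive $\M^c(X) \in \DM_{\gm}^{\eff}(\field)$, whose Betti realization computes compactly supported cohomology. More concretely, this can be checked by induction on a stratification by closed subvarieties: the base case $X \in \SmProj(\field)$ is tautological, and the induction step uses the localization triangles on both sides (Theorem \ref{thm:geometric_motives}(2) and Theorem \ref{thm:weight_complex}(iii)) together with the long exact sequence for compactly supported Betti cohomology, via the five-lemma. This yields the convergence $E_2^{a,b} \Rightarrow H_c^{a+b}(X(\mathbb{C}); A)$ and the announced canonical increasing weight filtration.

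For the rational degeneration at $E_2$, I compare with Deligne's weight spectral sequence for the mixed Hodge structure on $H_c^*(X(\mathbb{C}); \mathbb{Q})$. For a general variety this uses a smooth proper hyperresolution; in the smooth quasi-projective case it is given very concretely by Proposition \ref{prop:weight_complex_of_a_smooth_quasi-projective_variety}, where $\weight(X)$ is built from the SNC boundary strata $Y^{(k)}$ and Deligne's sequence from the log SNC compactification uses the same smooth projective pieces. Functoriality of both constructions identifies the two spectral sequences from $E_1$ onwards, whence strictness of morphisms of mixed Hodge structures yields $E_2$-degeneration of Deligne's sequence, and hence of ours. The main technical obstacle is lifting $\realization_B^b$ to a strict chain-level functor, since morphisms between Chow motives are correspondences rather than honest morphisms of varieties; as in \cite{GS96}, this is resolved by passing to a functorial cycle-theoretic cochain model (or via simplicial resolutions) and ensures the canonicity of the resulting spectral sequence.
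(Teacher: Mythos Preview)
The paper does not give its own proof of this lemma: it simply records the statement and cites \cite[Thm.~3]{GS96}. So there is no ``paper's proof'' to compare against; your proposal is essentially a sketch of (a version of) the argument in Gillet--Soul\'{e}.

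Your outline is in the right spirit --- realize the spectral sequence as the hypercohomology/double-complex spectral sequence of the termwise Betti realization of the weight complex, identify the abutment via localization and induction, and deduce rational $E_2$-degeneration by comparison with Deligne's weight spectral sequence --- and these are indeed the ingredients in \cite{GS96}. One point to be careful about, which you allude to but do not fully address: the weight complex $\weight(X)$ is only well-defined up to isomorphism in the \emph{homotopy} category $K^b(\Chow_{\rat}^{\eff}(\field)^{\op})$, not as an honest complex, so your double complex $K^{a,b}$ depends on a choice of representative. You need to argue that the resulting spectral sequence is independent of this choice from the $E_2$-page onward (which follows because a chain homotopy between representatives induces a chain homotopy on the realized complexes, hence an isomorphism on $E_2$ compatible with the filtrations). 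In \cite{GS96} this is handled by constructing the weight complex from a fixed hyperresolution and invoking cohomological descent directly, which bypasses the lifting-to-chain-level issue you flag at the end.
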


\begin{remark}\label{rem:Betti_weight_cohomology_with_compact_support_for_a_smooth_quasi-projective_variety}
Let $X$ be a connected smooth quasi-projective variety of dimension $d$. 
Fix a log compactification $(\overline{X},\partial X)$ with very simple normal crossing boundary divisor.
Following the notation in (\ref{eqn:strata_for_boundary_divisor}), $\partial X=\cup_{i=1}^n Y_i$.
By Proposition \ref{prop:weight_complex_of_a_smooth_quasi-projective_variety}, $H_{\weight,c}^{a,b}(X;\mathbb{Z})$ is the $a$-th cohomology of the cochain complex:
\[
\realization_B^b(W(X))=[H^b(\overline{X})\xrightarrow[]{\partial^*} H^b(Y^{(1)})\xrightarrow[]{\partial^*}\text{\tiny$\cdots$}\xrightarrow[]{\partial^*} H^b(Y^{(d)})].
\]
\end{remark}

\section{Integral cohomology of dual boundary complexes is motivic}\label{subsec:integral_cohomology_of_dual_boundary_complexes_is_motivic}

We're ready to prove our main result.

\begin{proof}[Proof of Proposition \ref{prop:dual_boundary_complex_is_motivic}]
As in Remark \ref{rem:Betti_weight_cohomology_with_compact_support_for_a_smooth_quasi-projective_variety}, 
$H_{\weight,c}^{a,0}(X;\mathbb{Z})$ is the $a$-th cohomology of the cochain complex:
\[
\realization_B^0(W(X))=[H^0(\overline{X})\xrightarrow[]{\partial^*} H^0(Y^{(1)})\xrightarrow[]{\partial^*}\text{\tiny$\cdots$}\xrightarrow[]{\partial^*} H^0(Y^{(d)})].
\]
As $H^0(\overline{X})=\mathbb{Z}$, and $H^0(Y^{(i)})=\oplus_{I\subset[n]:|I|=i, Y_I\neq\emptyset}\mathbb{Z}$ for $i\geq 1$, we get an identification:
\[
\realization_B^0(W(X))\isomorphic \tilde{C}^{\bullet}(\mathbb{D}\partial X)[-1],
\]
with the latter the reduced simplicial cochain complex of $\mathbb{D}\partial X$. The result then follows.
\end{proof}

As an application, we obtain:
\begin{lemma}\label{lem:stably_invariance_of_motives_with_compact_support}
If $X,Y$ are stably isomorphic $\field$-varieties: $X\times\mathbb{A}^j\isomorphic Y\times\mathbb{A}^j$ for some $j\geq 0$, then 
\[
\M^c(X)\isomorphic \M^c(Y)\in\DM_{\Nis}^{\eff,-}(\field).
\]
In particular, $\weightcomplex(X)\isomorphic\weightcomplex(Y)\in K^b(\Chow_{\rat}^{\eff}(\field)^{\op})$, and hence $H_{\weight,c}^{a,b}(X;\mathbb{Z})\isomorphic H_{\weight,c}^{a,b}(Y;\mathbb{Z})$.
\end{lemma}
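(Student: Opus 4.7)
The plan is to peel off the $\mathbb{A}^j$ factor at the level of motives with compact support, using the Tate twist formula together with Voevodsky's cancellation theorem, and then propagate the resulting isomorphism down to weight complexes and singular weight cohomology with compact support.

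First, I would iterate the formula $\M^c(-\times\mathbb{A}^1)\isomorphic \M^c(-)(1)[2]$ from Theorem \ref{thm:geometric_motives}.(1) to obtain
\[
\M^c(X\times\mathbb{A}^j)\isomorphic \M^c(X)(j)[2j], \qquad \M^c(Y\times\mathbb{A}^j)\isomorphic \M^c(Y)(j)[2j]
\]
in $\DM_{\Nis}^{\eff,-}(\field)$. Since $X\times\mathbb{A}^j\isomorphic Y\times\mathbb{A}^j$ as $\field$-varieties, functoriality of $\M^c$ yields an isomorphism of the right-hand sides, and shifting by $[-2j]$ gives $\M^c(X)(j)\isomorphic \M^c(Y)(j)$.

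Next, I would invoke the Cancellation theorem (Theorem \ref{thm:geometric_motives}.(3)): the Tate twist $-\otimes\mathbb{Z}(1)$ is fully faithful on $\DM_{\Nis}^{\eff,-}(\field)$, hence so is its $j$-fold iterate. A fully faithful functor reflects isomorphism classes (an iso $F(A)\xrightarrow[]{\isomorphic} F(B)$ lifts through fullness, and its inverse does too, with faithfulness ensuring the lifts compose to identities). Applied here, this gives the desired isomorphism $\M^c(X)\isomorphic \M^c(Y)$ in $\DM_{\Nis}^{\eff,-}(\field)$.

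For the weight-complex consequence, I would apply Bondarko's weight complex functor $t:\DM_{\gm}^{\eff}(\field)\rightarrow K^b(\Chow_{\rat}^{\eff}(\field))$ from Theorem \ref{thm:weight_complex_functor}. Using part (c) of that theorem, $t\circ\M^c\isomorphic\weightcomplex^-$, so we obtain $\weightcomplex^-(X)\isomorphic \weightcomplex^-(Y)$ in $K^b(\Chow_{\rat}^{\eff}(\field))$, which by Convention \ref{convention:weight_complex} (negating degrees and reversing arrows) is equivalent to $\weightcomplex(X)\isomorphic \weightcomplex(Y)$ in $K^b(\Chow_{\rat}^{\eff}(\field)^{\op})$. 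Finally, applying the additive functor $\realization_B^b:\Chow_{\rat}^{\eff}(\field)^{\op}\rightarrow\Ab$ term by term and taking the $a$-th cohomology yields $H_{\weight,c}^{a,b}(X;\mathbb{Z})\isomorphic H_{\weight,c}^{a,b}(Y;\mathbb{Z})$.

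The only step that is more than formal bookkeeping is the cancellation step, where one must be careful that a fully faithful functor genuinely transports isomorphisms backwards; beyond that, everything is a direct assembly of the results recalled in \S1.
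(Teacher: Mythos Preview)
Your proof is correct and follows the same line as the paper's: apply $\M^c(-\times\mathbb{A}^1)\isomorphic\M^c(-)(1)[2]$ iteratively, transport the isomorphism via $\M^c$, and then invoke Voevodsky's cancellation theorem to strip off the Tate twist. Your additional paragraph spelling out the passage to $\weightcomplex$ via Theorem~\ref{thm:weight_complex_functor}.(c) and then to $H_{\weight,c}^{a,b}$ is a bit more explicit than the paper, which leaves those ``in particular'' consequences to the reader, but the argument is identical in substance.
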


\begin{proof}
By Theorem \ref{thm:geometric_motives}.(1), the isomorphism $\phi:X\times\mathbb{A}^j\isomorphic Y\times\mathbb{A}^j$ induces an isomorphism
\[
\M^c(\phi):\M^c(X\times\mathbb{A}^j)=\M^c(X)(j)[2j] \xrightarrow[]{\isomorphic} \M^c(Y\times\mathbb{A}^j)=\M^c(Y)(j)[2j],
\]
hence equivalently,
$\M^c(\phi):\M^c(X)(j) \xrightarrow[]{\isomorphic} \M^c(Y)(j)$.
But by the Cancellation theorem \cite{Voe10} (see Theorem \ref{thm:geometric_motives}.(3)), we have a natural isomorphism
\[
-\otimes\mathbb{Z}(j): \Hom_{\DM_{\Nis}^{\eff,-}(\field)}(\M^c(X),\M^c(Y))\isomorphic \Hom_{\DM_{\Nis}^{\eff,-}(\field)}(\M^c(X)(j),\M^c(Y)(j))\ni \M^c(\phi).
\]
This shows that $\M^c(\phi)$ induces an isomorphism $\M^c(X)\isomorphic \M^c(Y)$, as desired.
\end{proof}

\begin{remark}
The equality $H_{\weight,c}^{a,b}(X)\isomorphic H_{\weight,c}^{a,b}(Y)$ in Lemma \ref{lem:stably_invariance_of_motives_with_compact_support}
admits an alternative proof without using geometric motives: 
It suffices to show
\begin{equation}
H_{\weight,c}^{a,b}(X\times\mathbb{A}^1;\mathbb{Z})\isomorphic H_{\weight,c}^{a,b-2}(X;\mathbb{Z}).
\end{equation}
By Proposition \ref{prop:weight_complex_of_a_smooth_quasi-projective_variety}, $\weight(\mathbb{A}^1)=[\M_{\rat}(\mathbb{P}^1)\xrightarrow[]{i^*}\M_{\rat}(\pt)]$, where $\M_{\rat}(\mathbb{P}^1)$ lies in degree $0$.
Observe that $H^*(\mathbb{P}^1)=\mathbb{Z}\oplus\mathbb{Z}[-2]\xrightarrow[]{i^*} H^*(\pt)=\mathbb{Z}$ with the obvious projection.
So, 
$H_{\weight,c}^{\bullet,\star}(\mathbb{A}^1)=\mathbb{Z}[0]\{-2\}$, 
by which we mean $\mathbb{Z}$ concentrated in bidegree $(a,b)=(0,2)$.
By Theorem \ref{thm:weight_complex} $(v)$, we have: 
\[
\weight(X\times\mathbb{A}^1)\isomorphic\weight(X)\otimes\weight(\mathbb{A}^1)\in K^b(\Chow_{\rat}^{\eff}(\field)^{\op}).
\]
It follows that we obtain the desired bi-graded isomorphism
\[
H_{\weight,c}^{\bullet,\star}(X\times\mathbb{A}^1;\mathbb{Z})\isomorphic H_{\weight,c}^{\bullet,\star}(X;\mathbb{Z})\otimes H_{\weight,c}^{\bullet,\star}(\mathbb{A}^1;\mathbb{Z})
=H_{\weight,c}^{\bullet,\star}(X;\mathbb{Z})[0]\{-2\}.
\]
\end{remark}

\section{Dual boundary complexes of stably affine spaces}

For the proof of Corollary \ref{cor:dual_boundary_complex_of_a_stably_affine_space_is_contractible}, we will need some constraints on the fundamental group of a dual boundary complex.
This is pioneered by Koll\'{a}r-Xu \cite[\S 5]{KX16}. In our case, what we need is the following variation:
\begin{lemma}\label{lem:fundamental_group_for_bulk_vs_dual_boundary_complex}
Let $X$ be a smooth connected affine algebraic variety of dimension $d\geq 3$, with any log compactification $(\overline{X},D=\overline{X}-X)$. 
Then $\mathbb{D}\partial X$ is connected, and we have a natural surjection
\[
\pi_1(X)\twoheadrightarrow \pi_1(\overline{X})\simeq \pi_1(D)\twoheadrightarrow \pi_1(\mathbb{D}\partial X).
\]
\end{lemma}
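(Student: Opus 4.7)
The plan is to factor the sought surjection as
\[
\pi_1(X) \twoheadrightarrow \pi_1(\overline{X}) \xrightarrow{\isomorphic} \pi_1(D) \twoheadrightarrow \pi_1(\mathbb{D}\partial X)
\]
and verify each arrow separately. Since $\mathbb{D}\partial X$ is a homotopy invariant of $X$ alone (\cite{Dan75}), I would first refine $(\overline{X},D)$ by further blow-ups so as to arrange $D=\bigcup_{i=1}^n Y_i$ to be \emph{very simple} normal crossing, i.e.\ every nonempty intersection $Y_I = \bigcap_{i\in I} Y_i$ is smooth and connected.

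For the connectedness statement, since $X$ is smooth affine of complex dimension $d \geq 3$, Andreotti--Frankel implies $X(\mathbb{C})$ has the homotopy type of a CW complex of real dimension $\leq d$, so $H^{2d-1}(X;\mathbb{Z}) = 0$ and Poincar\'e--Lefschetz duality forces $H_c^1(X;\mathbb{Z}) = 0$. The long exact sequence for $(\overline{X},D)$ then makes $H^0(\overline{X}) \twoheadrightarrow H^0(D)$ surjective, so $D$ is connected, and this is equivalent to connectedness of the dual complex of an SNC divisor.

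Next I would establish the three comparisons of fundamental groups. The surjection $\pi_1(X) \twoheadrightarrow \pi_1(\overline{X})$ is the usual transversality fact: $D(\mathbb{C})$ has real codimension $\geq 2$ in the smooth ambient $\overline{X}(\mathbb{C})$, so any loop in $\overline{X}$ can be pushed off $D$. For the isomorphism $\pi_1(\overline{X}) \isomorphic \pi_1(D)$, I would invoke the Hamm--L\^{e} Lefschetz hyperplane theorem for (possibly singular) ample divisors: because $X$ is affine, $D$ supports an ample divisor, so after re-embedding $\overline{X}$ projectively via a suitable multiple of this divisor, one finds a hyperplane section whose support equals that of $D$; Hamm--L\^{e} then yields $\pi_i(\overline{X},D) = 0$ for $i \leq d-1$, which for $d \geq 3$ gives the desired isomorphism on $\pi_1$. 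For the final surjection $\pi_1(D) \twoheadrightarrow \pi_1(\mathbb{D}\partial X)$, I would use the nerve picture: $D$ is weakly equivalent to the geometric realization of the \v{C}ech-type semi-simplicial space $Y^{(\bullet)}$ (Mayer--Vietoris for the closed cover $\{Y_i\}$ with connected intersections), while $\mathbb{D}\partial X$ is the realization of $\pi_0(Y^{(\bullet)})$. The levelwise quotient $Y^{(k)} \twoheadrightarrow \pi_0(Y^{(k)})$ realizes to a natural map $D \to \mathbb{D}\partial X$, and a \v{C}ech-style van Kampen argument shows $\pi_1(D)$ is generated by loops made of arcs inside individual $Y_i$'s glued along transitions through points of the $Y_{ij}$'s, whose combinatorial images generate the edge-path group of $\mathbb{D}\partial X$.

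The main obstacle I anticipate is the middle Lefschetz step. Hamm--L\^{e} controls a hyperplane section rather than an arbitrary SNC divisor, so some care is needed to re-embed $\overline{X}$ so that a hyperplane cuts out a divisor with the same support as $D$; this is possible only because $X$ is affine (so $D$ supports an ample divisor) and useful only when $d \geq 3$ (so the Lefschetz connectedness range reaches $\pi_1$). The outer two arrows are essentially formal once the very simple SNC model is fixed.
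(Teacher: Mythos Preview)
Your proposal is correct and follows the same three-step factorization as the paper, with only cosmetic differences in the tools invoked: the paper proves the Lefschetz step $\pi_i(\overline{X},D)=0$ for $i<d$ via Milnor's squared-distance Morse function on the affine $X$ rather than citing Hamm--L\^e, derives connectedness of $D$ (hence of $\mathbb{D}\partial X$) directly from that step rather than from your separate Andreotti--Frankel/duality argument, and for the final surjection simply cites \cite[Lem.~26]{KX16} in place of your nerve/van~Kampen sketch. Your preliminary refinement to very simple SNC is unnecessary for the paper's inputs but harmless.
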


\begin{proof}
This is a consequence of the Lefschetz hyperplane theorem. We give some details for completeness.
Say, $X$ is a proper closed subset of $\mathbb{A}^N$. Take a generic point $p\in\mathbb{A}^N\setminus X$, and define a squared-distance function $L_p:X\rightarrow\mathbb{R}$ as in \cite[p.41]{Mil63}. Then define $f:\overline{X}\rightarrow \mathbb{R}$ by: $f(x):=0$ if $x\in D$, and $f(x):=\frac{1}{L_p(x)}$. Now the same argument as in \cite[Thm.7.4]{Mil63} shows that $\pi_i(\overline{X},D)=0$ for all $i<d$.
In particular, $\pi_i(D)\simeq\pi_i(\overline{X}), i=0,1$ by the long exact sequence of homotopy groups. By definition, $\pi_0(\mathbb{D}\partial X)\simeq \pi_0(D)$. So, $\pi_0(\mathbb{D}\partial X)\simeq\pi_0(\overline{X})=0$. Now, by \cite[Lem.26]{KX16}, there is a natural surjection $\pi_1(D)\twoheadrightarrow \pi_1(\mathbb{D}\partial X)$. Finally, as $D$ has complex codimension $1$, hence real codimension $2$ in $\overline{X}$, we have a surjection $\pi_1(X=\overline{X}-D)\twoheadrightarrow \pi_1(\overline{X})$ by a perturbation argument. This completes the proof.
\end{proof}

Now, we come to our second goal.
\begin{proof}[Proof of Corollary \ref{cor:dual_boundary_complex_of_a_stably_affine_space_is_contractible}]
Let $Y$ be a stably affine space of dimension $d>0$. Say, $Y\times\mathbb{A}^{\ell}\isomorphic \mathbb{A}^{d+\ell}$.
As $Y\times\mathbb{A}^{\ell}\isomorphic\mathbb{A}^{d+\ell}$ is smooth, so is $Y$. 
Clearly, $Y$ is connected and contractible as an analytic topological space.

If $d\leq 2$, then $Y\isomorphic\mathbb{A}^d$ by \cite{AHE72,Fuj79,MS80}, so $\mathbb{D}\partial Y\simeq \mathbb{D}\partial\mathbb{A}^d=\pt$, as desired.

If $d\geq 3$, by Lemma \ref{lem:fundamental_group_for_bulk_vs_dual_boundary_complex}, $\mathbb{D}\partial Y$ is connected, and
$\pi_1(\mathbb{D}\partial Y)\isomorphic \pi_1(Y)=0$.
In addition, by Lemma \ref{lem:stably_invariance_of_motives_with_compact_support} and Proposition \ref{prop:dual_boundary_complex_is_motivic},
\[
\tilde{H}^i(\mathbb{D}\partial Y,\mathbb{Z})\isomorphic H_{\weight,c}^{i+1,0}(Y;\mathbb{Z}) \isomorphic H_{\weight,c}^{i+1,0}(\mathbb{A}^{\ell};\mathbb{Z})=0,
\]
for all $i\in\mathbb{N}$. Thus, $H_i(\mathbb{D}\partial Y,\mathbb{Z})\isomorphic H_i(\pt,\mathbb{Z})$ and $\pi_1(\mathbb{D}\partial Y)=0$. Now, by the Hurewicz theorem, $\pi_i(\mathbb{D}\partial Y)\simeq 0$ for all $i\geq 0$. Then by the Whitehead theorem for CW complexes, $\mathbb{D}\partial Y\sim\pt$.
\end{proof}

\subsection*{Acknowledgements}

The author would like to thank the anonymous referee for the suggestion to turn the motivic part of the original article \cite[v3]{Su23} into a single paper.

\end{document}